\documentclass[12pt]{article}

\usepackage{amsmath, amssymb, amsthm}  
\usepackage{geometry}  
\usepackage{enumitem}  
\usepackage{mathrsfs}  
\usepackage{graphicx}  
\usepackage{float} 
\usepackage{cite}      
\usepackage{color}     
\usepackage{tikz}      
\usepackage{authblk}   
\usepackage{tikz-cd}

\geometry{a4paper, margin=1in}

\usepackage{authblk}

\newtheorem{theorem}{Theorem}[section]
\newtheorem{lemma}[theorem]{Lemma}
\newtheorem{proposition}[theorem]{Proposition}

\theoremstyle{definition}

\newtheorem{example}[theorem]{Example}

\theoremstyle{remark}
\newtheorem{remark}[theorem]{Remark}


\newcommand{\dC}{\mathbb{C}}
\newcommand{\dE}{\mathbb{E}}

\newcommand{\dN}{\mathbb{N}}
\newcommand{\dP}{\mathbb{P}}
\newcommand{\dR}{\mathbb{R}} 

\newcommand{\dT}{\mathbb{T}}
\newcommand{\dZ}{\mathbb{Z}}

\newcommand{\lB}{\mathcal{B}}

\newcommand{\lF}{\mathcal{F}}
\newcommand{\lH}{\mathcal{H}}

\newcommand{\lT}{\mathcal{T}}
\newcommand{\lX}{\mathcal{X}}

\newcommand{\dd}{\mathrm{d}}

\usepackage{color}

\title{Random Schr\"{o}dinger operators and convolution on wreath products}
\author{Adam Arras\thanks{The author was partially supported by the KKP 139502 project and the ERC Dynasnet 810115 project.}}
\date{}
\affil{HUN-REN Alfr\'ed R\'enyi Institute of Mathematics, Budapest, Hungary}

\begin{document}

\maketitle

\begin{abstract}
We establish a spectral correspondence between random Schr\"{o}dinger operators and deterministic convolution operators on wreath products, generalizing previous results that relate Lamplighter groups to Schr\"{o}dinger operators with Bernoulli potentials. Using this correspondence in both directions, we obtain an elementary criterion for the absolute continuity of convolutions on wreath product, Lifschitz tail estimates for Schr\"{o}dinger operators on Cayley graph of polynomial growth, and an exact formula for the second moment of the Green function, expressed in terms of the wreath product with an Abelian group of lamps.
\end{abstract}


\section{Introduction and main results}

The central theme of this paper is the spectral connection between deterministic convolution operators on wreath product groups and random Schr\"{o}dinger operators. We will mostly be concerned with Hilbert spaces of the form $\lH=\ell^2(\Gamma)$, $\Gamma$ being a countable, discrete group with identity $e_\Gamma$. Given $x\in \Gamma$, the delta function $\delta_x$ is defined by $ \delta_x(y) = 1 $ if $ y = x $, and $\delta_x(y) = 0$ otherwise. The spectral representation\cite[Theorem VIII.6]{reed1972methods} of a self-adjoint operator will be denoted $$L=\int_{\Sigma(L)} \lambda E_L(\dd \lambda), \qquad \mu_L^{\varphi,\psi}(\cdot) = \langle \varphi, E_L(\cdot) \psi\rangle.$$ We shorten the notation of the spectral measures by setting $\mu_L^x=\mu_L^{\delta_x,\delta_x}$.  

\paragraph{Convolution operators.} The \textit{left regular representation} $(L_x f)(y) = f(x^{-1}y)$, for any square summable function $f\in \ell^2(\Gamma)$, defines an unitary action of the group $\Gamma$. To any complex finite measure $m \in \mathcal{M}_{\dC}(\Gamma) \simeq \ell^1(\Gamma)$, we consider the sum $L_m := \sum_{x} m(x)L_x$. Any operators of this form will be referred as a \textit{convolution operators}. When $m$ is a probability measure, this correspond to the Markov operator associated with the simple random walk, where increments are i.i.d. and follow the distribution $m$. We refers to the book \cite{folland2016course} for a modern introduction to group convolution. A convolution operator is self-adjoint if and only if the associated measure satisfies $m(g^{-1}) = \bar{m}(g)$. In this case, the spectral measure $\mu_{L_m}^{x}$ exists and is independent of $x \in \Gamma$. This measure is commonly known as the Plancherel measure \cite{bordenave2016spectrum}, or the Kesten-von Neumann-Serre spectral measure \cite{grigorchuk2004ihara}. A case of particular of interest is when the measure takes value zero and one, corresponding to the adjacency operator of a Cayley graph. The spectral analysis of this algebraic structure, initiated by Kesten \cite{kesten1959symmetric}, has led to some surprising recent developments \cite{grigorchuk2001lamplighter}; see also \cite{bordenave2016spectrum,grigorchuk2021laplace,de2024spectra,mohar1989survey,linnell1998analytic,elek2003analytic}.

\paragraph{Random Schr\"{o}dinger operator.} In his seminal work \cite{anderson1958absence}, the physicist P. W. Anderson studied the transport properties of disordered crystals. This quantum analogue of random walks in a random environment is modeled as follows. The Cayley graph $\operatorname{Cay}(\Gamma,S)$ of an infinite, finitely generated group describes the hopping terms of a particle on the crystal lattice. The presence of small-scale impurities is represented by an independent and identically distributed scalar field $(v(x, \omega))_{x\in \Gamma}$. To avoid technicalities, we only consider distribution with bounded support. The system evolves according to the unitary group generated by the \textit{Random Schr\"{o}dinger operator} $H(\omega)=A+V$ of form \textit{Laplacian plus potential\footnote{Instead of $A=L_{1_S}$ where $1_S = \sum_{S} \delta_s$, the true Laplacian of a Cayley graph $\operatorname{Cay}(\Gamma,S)$ is the convolution operator $\Delta = |S|I-A$.}}, which acts on $\varphi \in \ell^2(\Gamma)$ via 
\begin{equation}\label{eq:defSchrOp}
  (H(\omega)\varphi)(x) = \sum_{y \sim x} \varphi(y) + v(x,\omega) \varphi(x).  
\end{equation}
Since the distribution of the operator is invariant by the action of $\Gamma$ on $\ell^2(\Gamma)$, it follows from ergodicity theory that global features of $H(\omega)$ are almost sure. For example, the spectrum verifies $\Sigma(H(\omega)) \overset{a.s.}{=}\Sigma_H$, for some closed set $\Sigma_H$, called the almost sure spectrum. The same applies to any subset in the Lebesgue decomposition of the spectral measures $\mu_{H(\omega)}^{x}$ into absolutely continuous, singular continuous and pure point part. We refer to \cite[Theorem 3.10]{aizenman2015random} for more details. In the one dimensional case $\Gamma = \dZ$, it turns out that arbitrarily small randomness completely breaks down the transport properties of the medium. This translates into pure point spectrum and exponential decay of the eigenfunctions for the operator $H(\omega)$. This phenomenon, that also occurs for any group $\Gamma$ for high enough disorder, is referred as Anderson’s localization \cite{stolz2011introduction}. It is by now reasonably understood from a mathematical viewpoint, since the work of Fröhlich and Spencer (via multi-scale analysis \cite{frohlich1983absence}) and Aizenman and Molchanov (via fractionnal moment method \cite{aizenman1993localization}, see also \cite{graf1994anderson}). In contrary, the conjectured persistence at low disorder of transport in the case of $\Gamma=\dZ^d,d\geq3$, and therefore the stability of an absolutely continuous spectrum, remains a challenging open question despite considerable efforts of the community. Some results are known in the case where the potential decays at infinity \cite{bourgain2001random,bourgain2003random}, but the presence of absolutely continuous spectrum for a stationary potential has only been proven for trees. This has been achieved on the $d$-regular tree $\dT_d$ by Klein \cite{klein1998extended}, and generalized by different approaches since then \cite{froese2007absolutely,aizenman2006stability,keller2012absolutely,arras2023existence}.
\paragraph{Wreath product of group.} The wreath product $\Lambda \wr \Gamma$ of two discrete groups is defined as the semidirect product $(\bigoplus_{\Gamma} \Lambda) \rtimes \Gamma$. The factor $\bigoplus_{\Gamma} \Lambda$ denotes the direct sum, the subgroup of the direct product $\prod_\Gamma \Lambda$ that consists of elements with finite support. The factor $\Gamma$ acts on $\bigoplus_{\Gamma} \Lambda$ by shifting indices via the regular action. Elements of $\Lambda \wr \Gamma$ can thus be represented as pairs $(f, g)$, where $f: \Gamma \to \Lambda$ has finite support\footnote{In our context, the support corresponds to the set $f^{-1}(\Lambda \setminus \{e_\Lambda\})$.} and  $g \in \Gamma$. The group operations is given by  
$$
(f, g) \cdot (f', g') = (f \cdot (f' \circ g^{-1}), g g').
$$  
Here, $f' \circ g^{-1}$ denotes the shifted function $h \mapsto f'(g^{-1}h)$, reflecting the semidirect action. This algebraic structure has an intuitive geometric interpretation as automorphisms on a two-level rooted tree $\lT_{\Gamma,\Lambda}$ constructed as follows. The root $o$ is connected to each element $g$ of $\Gamma$ and each $g$ is attached to a copy of $\Lambda$, denoted $gl,l\in \Lambda$.
\begin{figure}[H]
\centering
\begin{tikzcd}
                         &        & o \arrow[lldd] \arrow[dd] \arrow[rrdd] &        &                          &        \\
                         &        &                                                &        &                          &        \\
g_1=e_\Gamma \arrow[d] \arrow[rd] &        & g_2 \arrow[d] \arrow[rd]                       & \ldots & g_n \arrow[d] \arrow[rd] &        \\
g_1l_1\ldots             & g_1l_p & g_2l_1\ldots                                   & g_2l_p & 
g_nl_1\ldots             & g_nl_p
\end{tikzcd}
\\ \,

    \caption{The tree $\lT_{\Gamma,\Lambda}$}
    \label{fig:TGH}
\end{figure} 
An automorphism is a permutation of the vertices that preserves the adjacency relation. In particular, it must fix the root. Given an element $h=(f,g) \in \Lambda \wr \Gamma$, one associates the automorphism $\varphi_h$ sending $s \in \Gamma$ to $\varphi_h(s)=gs$ and $sl\in \Gamma \times \Lambda$ to $\varphi_h(sl)=gsf(g)l$.\\
\\

An important example is the wreath product $(\mathbb{Z}/2\mathbb{Z}) \wr \mathbb{Z}$, often referred to as the \emph{lamplighter group}. It can be visualized as follows: consider a lamplighter located on an infinite street indexed by the integer. Along the street, there are lamps at each integer that can be either on or off. The lamplighter moves along the line, switching the state of the lamps as they go. The group $\mathbb{Z}$ corresponds to the position of the lamplighter, while the group $\bigoplus_{\mathbb{Z}} \mathbb{Z}/2\mathbb{Z}$ tracks the
configuration of the lamps. With this interpretation, we refer to $\Gamma$ as the base group and $\Lambda$ as the lamp group. A lamp configuration corresponds to a function $f \colon \Gamma \to \Lambda$ with finite support. Both the base group and the lamp group embed into the wreath product via the maps
\begin{equation}\label{eq:GammaLambdaEmbeds}
  \Gamma \ni g \hookrightarrow \hat{g}=(e,g), \qquad \Lambda \ni l \hookrightarrow \hat{l}=(1_{l},e_\Gamma),  
\end{equation}
where $e=e_{ {\underset{\Gamma}{\oplus}\Lambda}}$ and $1_{l}(g) = e_\Lambda$, if $g\not= e_\Gamma$ and $1_{l}(e_\Gamma) = l$ otherwise. We extend these embeddings to measures $\mathcal{M}_\dC(\Gamma), \mathcal{M}_\dC(\Lambda) \ni m \hookrightarrow \hat{m} \in \mathcal{M}_\dC(\Lambda \wr \Gamma).$

\paragraph{Main results.} The first explicit computation of the spectrum of wreath product goes back to \cite{grigorchuk2001lamplighter}, for the lamplighter group $(\dZ/2\dZ)\wr \dZ$ with suitable generators, leading to a first example of Cayley graph with dense but pure point spectrum, and to a counterexample to the Strong Atiyah Conjecture. Its connection to Random operator appeared \cite{lehner2008spectrum}, where lamplighters are related to percolation Hamiltonians. Our first theorem generalizes this correspondence to any wreath product, allowing to consider random operator with more general distribution than binary valued disorder.

\begin{theorem}\label{thm:wrcoresp}
Let $A=L_{m_\Gamma}$, $B=L_{m_\Lambda}$ be two self-adjoint convolution operators on countable groups $\Gamma,\Lambda$. Consider the random Schr\"{o}dinger operator on of the form $H(\omega)=A+V$ such that the random potential $(v(x,\omega))_{x\in \Gamma}$ is i.i.d, following the spectral measure $\mu_B^{e_\Lambda}$. On the wreath product $\Lambda\wr \Gamma$, consider the convolution operator $M=L_{m_{\Lambda \wr \Gamma}}$ with $m_{\Lambda \wr \Gamma}=\hat{m}_\Gamma+\hat{m}_\Lambda$. Then
\begin{equation}\label{eq:wrcoresp}
      \dE \mu_{H(\omega)}^{e_{\Gamma}}=\mu_M^{e_{\Lambda \wr \Gamma}}.
   \end{equation}
\end{theorem}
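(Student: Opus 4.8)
The plan is to prove the equality of the two measures by showing they have the same moments. Since $\mu_B^{e_\Lambda}$ is carried by $\Sigma(B)\subseteq[-\|B\|,\|B\|]$, the potential is almost surely uniformly bounded, so $H(\omega)$ and $M$ are bounded self-adjoint operators and both $\dE\mu_{H(\omega)}^{e_\Gamma}$ and $\mu_M^{e_{\Lambda\wr\Gamma}}$ are compactly supported probability measures, hence determined by their moments. It therefore suffices to show, for every integer $n\geq 0$, that $\dE\langle\delta_{e_\Gamma},H(\omega)^n\delta_{e_\Gamma}\rangle=\langle\delta_{e_{\Lambda\wr\Gamma}},M^n\delta_{e_{\Lambda\wr\Gamma}}\rangle$. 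I would prove this by expanding each side as a sum over decorated closed walks and producing a weight-preserving bijection between the two families.

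For the right-hand side, write $M=L_{\hat m_\Gamma}+L_{\hat m_\Lambda}$ and use $L_m^n\delta_z=\sum_{x_1,\dots,x_n}m(x_1)\cdots m(x_n)\,\delta_{x_1\cdots x_n z}$ to obtain $\langle\delta_{e_{\Lambda\wr\Gamma}},M^n\delta_{e_{\Lambda\wr\Gamma}}\rangle=\sum_{x_1\cdots x_n=e_{\Lambda\wr\Gamma}}\prod_{k}m_{\Lambda\wr\Gamma}(x_k)$, the sum being over length-$n$ words whose letters are either \emph{base letters} $(e,g)$ of weight $m_\Gamma(g)$ or \emph{lamp letters} $(1_l,e_\Gamma)$ of weight $m_\Lambda(l)$. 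The key structural computation is that, reading such a word left to right and recording the partial products $(F_k,G_k)=(F_{k-1},G_{k-1})\cdot x_k$, a base letter $x_k=(e,g)$ leaves the lamp configuration unchanged and sends the lamplighter from $G_{k-1}$ to $G_{k-1}g$, whereas a lamp letter $x_k=(1_l,e_\Gamma)$ fixes the lamplighter and right-multiplies by $l$ the value of the lamp at the current position $G_{k-1}$. Hence $x_1\cdots x_n=e_{\Lambda\wr\Gamma}$ if and only if the base letters describe a closed walk of the lamplighter based at $e_\Gamma$ and, for every site $x\in\Gamma$, the ordered product of the lamp letters applied while the lamplighter sits at $x$ equals $e_\Lambda$.

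For the left-hand side, expand $H(\omega)^n=(A+V)^n=\sum_{\eta\in\{A,V\}^n}T_{\eta_1}\cdots T_{\eta_n}$ and insert resolutions of the identity, so that $\langle\delta_{e_\Gamma},H(\omega)^n\delta_{e_\Gamma}\rangle$ becomes a sum over $\eta$ and over closed $\Gamma$-walks $e_\Gamma=x_0,x_1,\dots,x_n=e_\Gamma$ with $x_{j-1}=x_j$ at the $V$-steps, of $\big(\prod_{j:\eta_j=A}m_\Gamma(x_{j-1}x_j^{-1})\big)\big(\prod_{j:\eta_j=V}v(x_j,\omega)\big)$. Because the $v(x,\omega)$ are i.i.d.\ with law $\mu_B^{e_\Lambda}$, taking expectations (exchangeable with the absolutely convergent walk sum by dominated convergence) factorizes the potential product over sites: it becomes $\prod_{x\in\Gamma}\int\lambda^{k_x}\,\dd\mu_B^{e_\Lambda}(\lambda)=\prod_{x\in\Gamma}\langle\delta_{e_\Lambda},B^{k_x}\delta_{e_\Lambda}\rangle$, where $k_x$ counts the $V$-steps spent at $x$; expanding each factor as $\langle\delta_{e_\Lambda},B^{k}\delta_{e_\Lambda}\rangle=\sum_{l_1\cdots l_k=e_\Lambda}m_\Lambda(l_1)\cdots m_\Lambda(l_k)$ puts this side into the same shape as the right-hand side.

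The bijection then identifies the base letters of the wreath word with the $A$-steps of the Schr\"odinger walk, so that the lamplighter trajectory coincides with the Schr\"odinger particle trajectory (both driven by the same $\Gamma$-increments), and, for each site $x$, identifies the lamp letters applied at $x$ in time order with the letters $l_1,\dots,l_{k_x}$ produced by expanding the $k_x$-th moment of $\mu_B^{e_\Lambda}$; the return condition ``the ordered lamp product at $x$ equals $e_\Lambda$'' becomes exactly $l_1\cdots l_{k_x}=e_\Lambda$, and the two products of weights coincide. I expect the only real difficulty to be this bookkeeping for the wreath product — pinning down how the two families of generators act on a configuration as the word is read off, and keeping the time-ordering of the on-site lamp operations consistent with the ordering in the moment expansion of $\mu_B^{e_\Lambda}$ (the resulting discrepancy being a harmless reversal together with the inversions and complex conjugations it introduces, which are absorbed because the diagonal matrix elements are real and $m_\Gamma,m_\Lambda$ are Hermitian). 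The probabilistic and functional-analytic inputs — independence giving the factorization over sites, compact support making the moment problem determinate — are routine.
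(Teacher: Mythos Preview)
Your proposal is correct and follows essentially the same route as the paper: reduce \eqref{eq:wrcoresp} to the moment identities $\dE\langle\delta_{e_\Gamma},H(\omega)^n\delta_{e_\Gamma}\rangle=\langle\delta_{e_{\Lambda\wr\Gamma}},M^n\delta_{e_{\Lambda\wr\Gamma}}\rangle$, expand both sides over closed walks decorated by base/lamp (respectively $A$/$V$) letters, factorize the expectation over sites using independence, and match via $\int\lambda^{k}\,\mu_B^{e_\Lambda}(\dd\lambda)=\sum_{t_1\cdots t_k=e_\Lambda}\prod_j m_\Lambda(t_j)$. Your extra remarks on compact support, dominated convergence, and the time-ordering of lamp updates are sound refinements of the same argument; the paper organizes the combinatorics slightly differently (via the auxiliary sets $W_n$ and $T_k$) but the content is identical.
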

In the above proposition, the measure $m_{\Lambda \wr \Gamma}$ of the convolution operator $M$ is supported on the union of the embeddings $\hat{\Gamma}$ and $\hat{\Lambda}$. Regarding the lamplighter interpretation, the convolution corresponds to \textit{switch or walk} action. One can adapt the method to more general measures, such as the \textit{switch-walk-switch} random walk considered in \cite{lehner2008spectrum,grigorchuk2001lamplighter}. The averaged spectral measure $\bar{\mu}_H := \dE \mu_{H(\omega)}^{e_{\Gamma}}$ that appears in \eqref{eq:wrcoresp} is know as the \textit{Density of States} (DOS) in physics literature \cite{veselic2008existence,elek20072}. By ergodicity, this measure can alternatively be defined at a fixed realization $H(\omega)$, through a spatial average
$$\bar{\mu}_{H} \overset{a.s.}{=} \lim_{\Gamma_n \nearrow \Gamma}\frac{1}{|\Gamma_n|}\sum_{x\in \Gamma_n}\mu_{H(\omega)}^x.$$
This result holds under mild assumptions, for an increasing spanning sequence of subsets $\Gamma_n$ and the weak topology of measures. The DOS measure can be thought of as a generalization of the empirical spectral distribution, $\bar{\mu}_H = n^{-1} \sum_{i=1}^n \delta_{\lambda_i}$, of a finite-size matrix, and is therefore a purely spectral quantity. A natural question to consider is whether the identity \eqref{eq:wrcoresp} can be extended to a relation involving eigenvectors. This leads to our second main theorem, where we restrict ourselves to an Abelian group of lamps $\Lambda$. 

\begin{theorem}\label{thm:abeliancoresp} Let $A=L_{m_\Gamma}$, $B=L_{m_\Lambda}$ and $M=L_{m_{\Lambda \wr \Gamma}}$ as in theorem \ref{thm:wrcoresp} and suppose that $\Lambda$ is a finitely generated Abelian group. There is a natural construction of the probability space $(\Omega,\dP)$ on which $H(\omega)=A+V(\omega)$ is defined, such that $M$ is unitary equivalent to the direct integral of $H(\omega)$ over $(\Omega,\dP)$. More precisely, for some explicit unitary transform $\lF \colon L^2(\Omega, \dP; \ell^2(\Gamma)) \to \ell^2(\Lambda \wr \Gamma)$, one has
\begin{equation}\label{eq:UnitaryEquiv}
  \int_{\Omega}^\oplus H(\omega)\dP(\dd \omega) = \lF^* M \lF.  
\end{equation}
\end{theorem}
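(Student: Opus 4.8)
The plan is to build the unitary $\lF$ explicitly via Fourier analysis on the Abelian lamp group $\Lambda$, exploiting the fact that $\Lambda\wr\Gamma = (\bigoplus_\Gamma \Lambda)\rtimes\Gamma$ and that $\ell^2(\Lambda\wr\Gamma)$ decomposes as $\ell^2\big(\bigoplus_\Gamma\Lambda\big)\otimes\ell^2(\Gamma)$ as a vector space, with $\Gamma$ acting diagonally by the shift on the first factor and the regular representation on the second. Since $\Lambda$ is a finitely generated Abelian group, its Pontryagin dual $\widehat{\Lambda}$ is a compact Abelian group, and $\widehat{\bigoplus_\Gamma\Lambda} \cong \prod_\Gamma\widehat{\Lambda}$ with its product (Haar) probability measure — this will be the probability space $(\Omega,\dP)$, with a point $\omega=(\omega_x)_{x\in\Gamma}$, each $\omega_x\in\widehat{\Lambda}$, interpreted as the random potential value $v(x,\omega) = \widehat{m_\Lambda}(\omega_x)$ where $\widehat{m_\Lambda}$ is the Fourier transform of $m_\Lambda$ (this is exactly the function whose pushforward of Haar measure is $\mu_B^{e_\Lambda}$, giving i.i.d.\ potentials with the right law).

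The key steps, in order: (i) Define $\lF$ as the Fourier transform in the lamp variables only: for $\Phi\in L^2(\Omega,\dP;\ell^2(\Gamma))$, write $\Phi(\omega) = \sum_{g\in\Gamma}\Phi_g(\omega)\delta_g$ and set $(\lF\Phi)(f,g) = \int_\Omega \overline{\langle \omega, f\rangle}\,\Phi_g(\omega)\,\dP(\dd\omega)$, where $\langle\omega,f\rangle = \prod_{x}\omega_x(f(x))$ is the pairing between $\bigoplus_\Gamma\Lambda$ and its dual; unitarity is Plancherel on the compact group $\Omega$ together with orthonormality of $\{\delta_g\}$. (ii) Compute $\lF^* M\lF$ by checking the two pieces of $m_{\Lambda\wr\Gamma} = \widehat{m}_\Gamma + \widehat{m}_\Lambda$ separately. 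For the lamp part $L_{\widehat{m}_\Lambda} = \sum_{l\in\Lambda} m_\Lambda(l) L_{\widehat l}$: left-translation by $\widehat l = (1_l, e_\Gamma)$ multiplies a lamp configuration at the origin, so on the Fourier side it becomes multiplication by the character evaluated at $l$; summing against $m_\Lambda(l)$ yields multiplication by $\widehat{m_\Lambda}(\omega_{e_\Gamma})$ — but one must track how the $\Gamma$-coordinate $g$ shifts which lamp site is touched, and conjugating by the shift shows that on the sector indexed by $g$ one gets multiplication by $\widehat{m_\Lambda}(\omega_{g})$, i.e.\ exactly the potential $V(\omega)$ acting as $(V(\omega)\psi)(g) = v(g,\omega)\psi(g)$ on $\ell^2(\Gamma)$, fibrewise in $\omega$. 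For the base part $L_{\widehat{m}_\Gamma} = \sum_{h\in\Gamma} m_\Gamma(h)L_{\widehat h}$: translation by $\widehat h=(e,h)$ acts trivially on lamps (before the shift) and by left translation on $\Gamma$, so after conjugating by $\lF$ it becomes $A = L_{m_\Gamma}$ on $\ell^2(\Gamma)$, fibrewise and $\omega$-independent — one must be careful that $\widehat h$ does shift the lamp configuration via the semidirect structure, $(e,h)\cdot(f,g) = (f\circ h^{-1}, hg)$, but under Fourier this shift is absorbed correctly because the character pairing is shift-covariant and the measure $\dP$ is shift-invariant. (iii) Combining (i)–(ii) gives $\lF^* M\lF = \int_\Omega^\oplus (A+V(\omega))\,\dP(\dd\omega) = \int_\Omega^\oplus H(\omega)\,\dP(\dd\omega)$, and one checks the potential entries are i.i.d.\ with law $\mu_B^{e_\Lambda}$ because the coordinates $\omega_x$ are i.i.d.\ Haar on $\widehat\Lambda$ and $\widehat{m_\Lambda}$ pushes Haar forward to the spectral measure of $B$ at $e_\Lambda$ (the same computation underlying Theorem \ref{thm:wrcoresp}).

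The main obstacle is bookkeeping the semidirect-product twist: left translation by an embedded base element $\widehat h$ does not merely translate the $\Gamma$-coordinate, it also relabels the lamp configuration, and left translation by an embedded lamp $\widehat l$ modifies the lamp at the site $e_\Gamma$ but then interacts with the current $\Gamma$-position $g$ when one iterates. Getting the conjugation $\lF^*L_{\widehat\Lambda}\lF$ to land exactly on the \emph{diagonal} potential (rather than some off-diagonal or $g$-dependent-in-the-wrong-way operator) requires choosing the Fourier convention and the identification $\Omega\leftrightarrow\widehat{\bigoplus_\Gamma\Lambda}$ so that the shift-covariance works out; concretely one should define $v(x,\omega)$ using $\omega_x$ and verify on basis vectors $\delta_{(f,g)}$ that both generators act as claimed. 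A secondary point needing care is measurability and the precise sense of the direct integral $\int_\Omega^\oplus H(\omega)\,\dP(\dd\omega)$ — but since the potential is a bounded measurable field and $A$ is a fixed bounded operator, the standard theory of decomposable operators (e.g.\ as in the treatment of ergodic Schr\"odinger operators) applies verbatim. Once Theorem \ref{thm:wrcoresp} is in hand, the present theorem is essentially its "vector-valued" refinement, and the diagonal of the identity \eqref{eq:UnitaryEquiv} recovers \eqref{eq:wrcoresp} after taking the $(e,e)$ matrix entry and integrating.
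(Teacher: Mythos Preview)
Your proposal is correct and takes essentially the same approach as the paper: construct $\Omega$ as the Pontryagin dual $\prod_\Gamma \widehat{\Lambda}$ of the lamp-configuration group equipped with product Haar measure, take $\lF$ to be the Fourier transform in the lamp variables tensored with $I_{\ell^2(\Gamma)}$, and verify the intertwining on basis vectors $\delta_{(f,g)}$. The only differences are organizational: the paper first works out $\Lambda=\dZ$ in detail (Theorem~\ref{thm:Zcorresp} and Lemma~\ref{lmm:Vdistribution}) before noting that the general finitely generated Abelian case follows identically via the structure theorem, and it carries out the check by a direct matrix-element comparison $\langle \delta_{(f',g')}, M\delta_{(f,g)}\rangle = \langle \chi_{f'}\delta_{g'}, H_\Omega\, \chi_f\delta_g\rangle$ rather than your generator-by-generator analysis of the base and lamp parts.
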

In the above theorem, the probability space $(\Omega,\dP)$ will correspond to the Pontryagin dual of the Abelian group of lamp configuration. Some consequence of the Plancherel formula will be discussed in the last section.

\begin{remark} In this work, our main motivation is to highlight a link between operators arsing in two different context, namely mathematical physics and algebra. We have therefore chosen to state the main results in their simplest forms. For instance, we only consider bounded potential, to avoid the technicalities of unbounded self-adjoint operators. Furthermore, we have presented our results in the fully transitive case, where the Laplacian acts by convolution and the disorder distribution (resp. the lamp group) remain the same at each site. Most of our results generalize naturally to any operator $H(\omega)=A+V(\omega)$ where $A$ acting on an space $\ell^2(\lX)$, and where the distribution of the potential (resp. the group of lamp) may vary at each site $x\in \lX$. This would correspond to the setting of the generalized wreath product $(\Lambda_x)_{x\in \Gamma} \wr \Gamma$ considered in \cite{erschler2006isoperimetry}. 
\end{remark}

\section{Applications}

\subsection{Absolutely continuous spectrum on wreath product group}

As pointed out in the introduction, proving the (almost sure) presence of an absolutely continuous component for the spectral measures of a random Schr\"{o}dinger operator is a notably difficult problem. Similarly, to the best of the author’s knowledge, there is no general method available for determining the absolute continuity of the Plancherel measure $\mu_{L_m}^x$ of a (deterministic) convolution operator. However, in the special case where the measure satisfies certain commutation relations with additive characters, a criterion has been derived in \cite{muantoiu2007spectral}. As an immediate consequence of Theorem \ref{thm:wrcoresp}, we obtain the following result regarding convolutions on wreath products.

\begin{theorem}\label{thm:ACspecForWreath}
Consider a convolution operator $M = L_{m_{\Lambda \wr \Gamma}}$ on a wreath product $\Lambda \wr \Gamma$. Suppose that the measure $m_{\Lambda \wr \Gamma}$ decomposes, as in Theorem \ref{thm:wrcoresp}, into a sum $\hat{m}_\Lambda + \hat{m}_\Gamma$. Assume that the spectral measure $\mu_{M_{\Lambda}}^{e_\Lambda}$ is absolutely continuous with bounded density. Then, the same holds for the Plancherel measure $\mu_M^{e_{\Lambda \wr \Gamma}}$, with the same bound. In particular, $M$ is a purely absolutely continuous operator.
\end{theorem}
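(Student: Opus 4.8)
The strategy is to transfer the claim, via Theorem~\ref{thm:wrcoresp}, into a statement about the density of states of a random Schr\"{o}dinger operator, and then to invoke the classical spectral-averaging (Wegner-type) mechanism. Write $B=L_{m_\Lambda}$, let $\rho\in L^\infty(\dR)$ be the density of the single-site distribution $\mu_B^{e_\Lambda}$, and set $C=\|\rho\|_\infty$. By Theorem~\ref{thm:wrcoresp}, the Plancherel measure to be estimated equals the density of states
\[
\mu_M^{e_{\Lambda\wr\Gamma}}\;=\;\bar\mu_H\;:=\;\dE\,\mu_{H(\omega)}^{e_\Gamma},\qquad H(\omega)=A+V(\omega),
\]
where $(v(x,\omega))_{x\in\Gamma}$ are i.i.d.\ with density $\rho$. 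It thus suffices to prove that $\bar\mu_H$ is absolutely continuous with $\dd\bar\mu_H/\dd x\le C$.

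Fix $x\in\dR$ and $\epsilon>0$, and let $P=\langle\delta_{e_\Gamma},\,\cdot\,\rangle\,\delta_{e_\Gamma}$ be the rank-one projection at the origin. Decompose $H(\omega)=H'(\omega)+v(e_\Gamma,\omega)\,P$, where $H'(\omega)$ is bounded self-adjoint and depends only on $(v(y,\omega))_{y\neq e_\Gamma}$; conditionally on $H'(\omega)$, the coupling $t:=v(e_\Gamma,\omega)$ is independent with density $\rho$. For $H'$ fixed, the Aronszajn--Krein rank-one resolvent identity gives $-1/F_t(z)=G_0(z)-t$ for the Borel transform $F_t(z)=\langle\delta_{e_\Gamma},(H'+tP-z)^{-1}\delta_{e_\Gamma}\rangle$, where $G_0:=-1/F_0$; hence
\[
\operatorname{Im}F_t(z)=\frac{\operatorname{Im}G_0(z)}{(\operatorname{Re}G_0(z)-t)^2+(\operatorname{Im}G_0(z))^2}.
\]
Since $\operatorname{Im}G_0(x+i\epsilon)>0$, the map $t\mapsto\tfrac1\pi\operatorname{Im}F_t(x+i\epsilon)$ is precisely the Poisson kernel of the upper half-plane based at $G_0(x+i\epsilon)$, of total mass $1$, so
\[
\frac1\pi\int_{\dR}\operatorname{Im}F_t(x+i\epsilon)\,\rho(t)\,\dd t\;\le\;\|\rho\|_\infty\;=\;C .
\]
Averaging over $H'(\omega)$ (all quantities nonnegative, Tonelli) yields $\tfrac1\pi\operatorname{Im}\int_{\dR}(\lambda-x-i\epsilon)^{-1}\,\dd\bar\mu_H(\lambda)\le C$ for \emph{every} $x\in\dR$ and $\epsilon>0$.

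Denote by $u_\epsilon$ the function just bounded, i.e.\ the Poisson extension of $\bar\mu_H$ to height $\epsilon$. As $\epsilon\downarrow0$ the measures $u_\epsilon(x)\,\dd x$ converge weakly-$*$ to the (finite) measure $\bar\mu_H$, and since $0\le u_\epsilon\le C$ pointwise, the limit satisfies $\bar\mu_H\le C\,\dd x$; that is, $\bar\mu_H$ is absolutely continuous with density at most $C$, which proves the assertion for $\mu_M^{e_{\Lambda\wr\Gamma}}$. For pure absolute continuity of $M$ itself: $M$ being a convolution operator, $\mu_M^{\delta_h}=\mu_M^{e_{\Lambda\wr\Gamma}}$ for every $h\in\Lambda\wr\Gamma$, so each $\delta_h$ lies in the (closed) absolutely continuous subspace $\lH_{\mathrm{ac}}(M)$; as the $\delta_h$ span a dense subspace of $\ell^2(\Lambda\wr\Gamma)$, we conclude $\lH_{\mathrm{ac}}(M)=\ell^2(\Lambda\wr\Gamma)$.

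The main obstacle is the spectral-averaging step: one must legitimately isolate the single site $e_\Gamma$, recognize the coupling-constant average as a Poisson integral of the single-site density $\rho$ (this is where the bounded-density hypothesis enters and where the bound $C$ is preserved), and interchange this average with the expectation over the remaining disorder — which is why I keep $\epsilon>0$ throughout and pass to the boundary only at the very end, through the weak-$*$ limit. A minor point to check is that the splitting $H(\omega)=H'(\omega)+v(e_\Gamma,\omega)P$ is valid for a general bounded self-adjoint convolution operator $A=L_{m_\Gamma}$: the only diagonal contribution of $A$ at $e_\Gamma$ is the constant $m_\Gamma(e_\Gamma)$, which is simply absorbed into $H'(\omega)$.
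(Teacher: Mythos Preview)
Your argument is correct and follows essentially the same route as the paper: invoke Theorem~\ref{thm:wrcoresp} to reduce to a Wegner-type bound on $\bar\mu_H$, isolate the site $e_\Gamma$ as a rank-one perturbation, average the resolvent over the single-site density $\rho$ using that the resulting integrand is a Poisson kernel of total mass $1$, and pass to the boundary to conclude $\dd\bar\mu_H/\dd\lambda\le\|\rho\|_\infty$. Your explicit closing argument that $\mu_M^{\delta_h}=\mu_M^{e_{\Lambda\wr\Gamma}}$ for all $h$ forces $\lH_{\mathrm{ac}}(M)=\ell^2(\Lambda\wr\Gamma)$ is a welcome spell-out of the ``in particular'' clause, which the paper leaves implicit.
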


\begin{example} Consider $\Lambda$ as the free group $\mathbb{F}_d, d\geq2$ or the free Abelian group $\dZ^d$, $d\geq3$, with standard set of generator $T$. For any finitely generated group $\Gamma = \langle S \rangle$, the Cayley graph on $\Lambda \wr \Gamma$ with generator $\hat{S} \sqcup \hat{T}$ has absolutely continuous spectrum. 
\end{example}

\begin{proof}[Proof of theorem \ref{thm:ACspecForWreath}] This follows from the correspondence \eqref{eq:wrcoresp} together with a regularity estimate for the density of states of Schr\"{o}dinger operator, called Wegner estimate (e.g. \cite[Chapter 4]{aizenman2015random}). Consider the random Schr\"{o}dinger operator $H(\omega)=A+\operatorname{Diag}(v_x)_{x\in \Gamma}$, conditionally to the value of potentials $\{v_y\}_{x\not= e_\Gamma}$, we obtain the rank one perturbation family: 
$$H_0 = A + \sum_{y \in \Gamma \setminus \{e_\Gamma\}} v_y \delta_y \delta_y^*,\qquad H_v = H_0 + v \delta_{e_\Gamma}\delta_{e_\Gamma}^*,\quad v\in \dR.$$
Given $z\in \dC \setminus \Sigma(H_v)$, the diagonal element $g_v(z) = \langle \delta_{e_\Gamma}, (H_v-z)^{-1}\delta_{e_\Gamma}\rangle$ of the resolvent coincides with the Stieltjes transform of the spectral measure $\mu_{H_v}^{e_{\Gamma}}$. The second resolvent identity, $(H_v-z)^{-1}-(H_0-z)^{-1}=-(H_v-z)^{-1}v\delta_{e_\Gamma}\delta_{e_\Gamma}^*(H_0-z)^{-1}$, allows to express $g_v(z)$ explicitly in terms of $g_0$ and $v$
$$g_v(z)=\frac{1}{g_0(z)^{-1}+v}.$$ 
For $z$ with positive imaginary part, we obtain by averaging over the distribution $\mu_B^{e_\Lambda}(\dd v )=\rho(v) \dd v$,  
$$\frac{1}{\pi}\int \Im g_v(z) \rho(v)\dd v \leq \frac{\|\rho\|_\infty}{\pi} \int_\dR \Im g_v(z) \dd v =  \frac{\|\rho\|_\infty}{\pi}\int_\dR \Im \frac{1}{g_0(z)^{-1}+v} \dd v = \|\rho\|_\infty.$$
Since this bound holds independently of $\{v_y\}_{x\not= e_\Gamma}$, it comes
$$\frac{1}{\pi}  \Im  \int \frac{\bar{\mu}_{H}(\dd \lambda)}{\lambda-z} = \frac{1}{\pi}\dE \Im \langle \delta_{e_\Gamma}, (H(\omega) -z)^{-1}\delta_{e_\Gamma}\rangle \leq \|\rho\|_\infty.$$
As the imaginary part of the Stieltjes transform is the convolution with the Cauchy kernel, one recovers the measure when the complex parameter $z$ approaches the real axis. From \cite[Lemma 4.2]{aizenman2015random}, the measure $\bar{\mu}_H$ is absolutely continuous with density bounded by $\| \rho \|_{\infty}$. This complete the proof, since $\mu_{M}^{e_{\Lambda \wr \Gamma}} = \bar{\mu}_{H}$.
\end{proof}

\subsection{Lifchitz tail on polynomial growth group}

Now, we  illustrate our correspondence in the other direction. From known estimate on wreath product, we will quantify the decay of the density of states near the spectral edges. Such estimate goes back to Lifshitz \cite{lifshitz1965energy} who had the following intuition. Consider a random Schrodinger operator $H(\omega)=A+V$, with the standard adjacency operator $\dZ^d$ and bounded potential $v_{\max}\overset{a.s.}{=} \sup_x v(x,\omega)$. Suppose that the Rayleigh quotient of some unit vector $\varphi$ is almost optimal 
$$\langle \varphi, H(\omega) \varphi \rangle \geq \max \Sigma(H)-\varepsilon, \qquad \varepsilon \ll 1.$$ 
Since $\max \Sigma(H)=2d+v_{\max}$, it follows by the triangle inequality and that a similar relation holds when one replaces $H$ either by $A$ or by $V(\omega)$. On the one hand, $\langle \varphi, A \varphi \rangle \geq 2d - \varepsilon$ implies by uncertainty principle that the vector has $\ell^2$ mass delocalized on a set of size $\varepsilon^{-d/2}$. On the other hand, $\langle \varphi, V(\omega) \varphi \rangle \geq v_{\max}-\varepsilon$ implies that the potential is nearly maximal on this set. By large deviation of independent events, this occurs with probability decaying exponentially in $\varepsilon^{-d/2}$. Since the density of state has the interpretation of average number of eigenvalues by unit volume, this suggests the following decay for the DOS measure at the spectral edge $\sigma :=\max \Sigma(H)$:
$$\bar{\mu}_{H}\left([\sigma-\varepsilon,\sigma] \right) \simeq \exp \left( \varepsilon^{-d/2}\right).$$
We refers to Theorem \ref{thm:LifshitzForPoly} below for a rigourous statement. Such relations, known as Lifshitz tails, are often used as an input for proving Anderson localization. An elementary proof for Euclidean lattices can be found in \cite{simon1985lifschitz}. A similar result for the $d$-regular tree was established in \cite{hoecker2014anderson}, with a double-exponential decay arises due to the exponential growth of $\dT_d$. Both proofs rely on the Fourier decomposition of the adjacency operator. Here, we extend this universal phenomenon to any polynomial growth group.\\

Recall that a non-negative measure $ m $ on a finitely generated group $ \Gamma = \langle S \rangle $ has finite second moment if $\sum_{x \in \Gamma} |x|_S^2 m(x) < \infty$, where $|x|_S$ denotes the distance to the identity $e_\Gamma$ in the associated Cayley graph. In this setting, the group is said to have polynomial growth if the size $V(n)$ of the balls of radius $ n $ in the Cayley graph satisfies $ V(n) \leq C n^d$, for some constants $ C, d > 0 $. A fundamental result of Gromov \cite{gromov1981groups} states that the smallest such $d$ is an integer independent of $S$, known as the growth degree of $\Gamma$.

\begin{theorem}[Lifshitz tail on any polynomial growth group]\label{thm:LifshitzForPoly}
Consider a Schr\"{o}dinger operator $H(\omega)=A+V$ such that the correspondence of Theorem \ref{thm:wrcoresp} holds. We assume that both groups $\Gamma,\Lambda$ have polynomial growth and that the measure $m_{\Lambda \wr \Gamma}$ is non-negative with finite second moment, containing a finite generating set. If $\Gamma$ is infinite,
$$\lim_{\varepsilon \downarrow 0} \frac{\ln (- \ln (\bar{\mu}_{H}\left([\sigma-\varepsilon,\sigma] \right) ))}{\ln \varepsilon}=-\frac{d}{2},$$
where $\sigma=\max \Sigma(H)$ and $d$ is the growth degree of $\Gamma$. 
\end{theorem}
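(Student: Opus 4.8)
The plan is to transfer the Lifshitz tail estimate via the correspondence $\bar\mu_H = \mu_M^{e_{\Lambda\wr\Gamma}}$ of Theorem \ref{thm:wrcoresp}, so that the whole problem becomes one about the return probability (or the bottom/top of the spectrum) of the convolution operator $M = L_{m_{\Lambda\wr\Gamma}}$ on the wreath product $\Lambda\wr\Gamma$. The spectral edge $\sigma = \max\Sigma(H)$ equals $\max\Sigma(M)$, and since $m_{\Lambda\wr\Gamma}$ is non-negative with finite second moment and contains a generating set, $M$ is (up to normalization) a symmetric random walk operator on $\Lambda\wr\Gamma$ with finite second moment. The quantity $\mu_M^{e}([\sigma-\varepsilon,\sigma])$ is controlled above and below by the behaviour of the spectral measure of $M$ near the top of its spectrum, which by standard Tauberian arguments is governed by the large-time asymptotics of $\langle\delta_e, (\sigma I - M)^{N}\delta_e\rangle$ or, after the substitution $M \mapsto cI - M$ turning the top edge into the bottom edge, by the on-diagonal heat kernel / return probability decay $p_{2N}(e,e)$ of the associated random walk. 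So the first step is to reduce, via these Tauberian correspondences (of the type in \cite{simon1985lifschitz}), the claimed $\ln\!-\!\ln$ asymptotic with exponent $-d/2$ to the statement that the heat-kernel-type decay at the spectral edge of $M$ on $\Lambda\wr\Gamma$ has stretched-exponential rate with exponent determined by $d = $ growth degree of $\Gamma$.

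The second and main step is to establish that edge behaviour. Here I would invoke the known sharp on-diagonal heat kernel asymptotics for random walks on wreath products: for $\Lambda$ of polynomial growth and $\Gamma$ of polynomial growth degree $d$ (infinite), the return probability of a symmetric finite-second-moment random walk on $\Lambda\wr\Gamma$ decays like $\exp(-c\, n^{d/(d+2)})$ — this is the Pittet–Saloff-Coste / Erschler circle of results (\cite{erschler2006isoperimetry} is cited in the paper precisely for generalized wreath products, and isoperimetric profiles of $\Lambda\wr\Gamma$ give Følner function $\exp(n^d)$, whence the $n^{d/(d+2)}$ Nash-type decay). Feeding the exponent $d/(d+2)$ through the Tauberian dictionary — where a return probability decay $\exp(-n^{\alpha})$ corresponds to a spectral density near the edge of order $\exp(-\varepsilon^{-\alpha/(1-\alpha)})$ — produces exactly $\alpha/(1-\alpha) = (d/(d+2))/(2/(d+2)) = d/2$, matching the claim. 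One has to be a little careful: the relevant decay is at the \emph{top} edge $\sigma$, not necessarily at $2\,|{\rm supp}|$ or wherever the walk's spectral radius sits; but because $m_{\Lambda\wr\Gamma}$ is non-negative, $\sigma = \|M\|$ is the actual spectral radius type quantity and the return probability $p_{2N}(e,e) = \int \lambda^{2N} d\mu_M(\lambda)$ is dominated by mass near $\pm\sigma$, so the $\ln\!-\!\ln$ rate is read off correctly. For the matching \emph{lower} bound on $\bar\mu_H([\sigma-\varepsilon,\sigma])$ one uses the corresponding lower bound on the return probability (also part of the Pittet–Saloff-Coste package, using that $\Lambda\wr\Gamma$ with $\Gamma$ of polynomial growth has Følner function comparable to $\exp(n^d)$ from both sides) together with a Paley–Zygmund / test-function argument to turn a heat-kernel lower bound into spectral mass near the edge.

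An alternative, more self-contained route for the lower bound on $\bar\mu_H$ that stays on the Schrödinger side is the classical Lifshitz heuristic made rigorous as in \cite{simon1985lifschitz}: exhibit a test configuration of the potential, of probability $\exp(-c\,\varepsilon^{-d/2})$, on which $H(\omega)$ has an approximate eigenvalue within $\varepsilon$ of $\sigma$ — namely force $v \equiv v_{\max}$ (or near it, which has positive probability per site since the single-site distribution is $\mu_B^{e_\Lambda}$ and $v_{\max} = \max\Sigma(B)$) on a ball $B_\Gamma(n)$ of radius $n \asymp \varepsilon^{-1/2}$ in $\Gamma$, and use that on such a ball the adjacency-type operator $A$ has a Rayleigh quotient within $C n^{-2} \asymp \varepsilon$ of its maximum (a Faber–Krahn / bottom-of-Dirichlet-spectrum statement, valid on any infinite polynomial growth group since balls of radius $n$ have $\asymp n^d$ vertices and diameter $\asymp n$). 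For the matching \emph{upper} bound one runs the two-sided Rayleigh-quotient argument sketched in the paper's own exposition: if $\langle\varphi, H\varphi\rangle \geq \sigma - \varepsilon$ then simultaneously $\langle\varphi, A\varphi\rangle \geq \max\Sigma(A) - \varepsilon$ and $\langle\varphi, V\varphi\rangle \geq v_{\max} - \varepsilon$; the first forces $\varphi$ to be spread over $\gtrsim \varepsilon^{-d/2}$ sites (a spectral-gap / uncertainty estimate on polynomial growth groups, again from $V(n)\asymp n^d$), and the second then costs probability $\exp(-c\,\varepsilon^{-d/2})$ by independence; combining with a standard deterministic bound on the number of eigenvalues per unit volume (the DOS is a probability measure) gives the upper bound on $\bar\mu_H([\sigma-\varepsilon,\sigma])$.

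\textbf{Main obstacle.} The delicate point is the \emph{upper} bound, specifically the quantitative uncertainty principle on a general polynomial growth group $\Gamma$: one needs that a near-maximal Rayleigh quotient for $A$ forces the $\ell^2$-mass of $\varphi$ to occupy a region of volume at least $c\,\varepsilon^{-d/2}$, with the \emph{correct} exponent $d/2$ and not merely some power. On $\dZ^d$ this is immediate from Fourier analysis, and on $\dT_d$ from spherical functions, but on a general Gromov-polynomial-growth group one must instead use Faber–Krahn / relative-isoperimetric inequalities with the sharp volume exponent — this is exactly where Gromov's theorem and the structure theory (virtual nilpotency, with well-understood volume growth and Følner sets) enters, and it is the step that needs the most care. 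The rest is, as the authors suggest, a packaging of standard Tauberian and large-deviation inputs through the correspondence of Theorem \ref{thm:wrcoresp}.
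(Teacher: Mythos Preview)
Your primary route is exactly the paper's proof: transfer via Theorem~\ref{thm:wrcoresp} to the Plancherel measure of $M$ on $\Lambda\wr\Gamma$, invoke Erschler's return-probability asymptotic $m_{2n}\sim\exp(-n^{d/(d+2)}\log^\alpha n)$ from \cite{erschler2006isoperimetry} (with $\alpha\in\{0,\,2/(d+2)\}$ according as $\Lambda$ is finite or infinite), pass to the continuous-time heat kernel via \cite{pittet2000stability}, and run the elementary Laplace-type two-sided bound $e^{-\varepsilon t}\bar\mu_H(I_\varepsilon)\le k(t)\le e^{-\varepsilon t}+\bar\mu_H(I_\varepsilon)$ with $t$ optimized against $\varepsilon$. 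Your ``alternative self-contained route'' and the obstacle you flag --- a sharp Faber--Krahn/uncertainty inequality on a general polynomial growth group --- are neither used nor needed: the paper stays entirely on the wreath-product side, where Erschler's sharp two-sided estimates absorb that difficulty.
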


\begin{example} Consider the adjacency operator $A$ the discrete Heisenberg with respect to the standard presentation $\Gamma=H_3(\dZ)=\langle x,y,z\, |\, [x,z]=[y,z]=e, [x,y]=z \rangle$, which has polynomial growth $d=4$. Let $V$ be i.i.d potential of value $\pm \sigma$. Then the operator $H=A+V$ can easily seen in correspondence a convolution operator on $(\dZ/2\dZ) \wr H_r(\dZ)$. Following remark \ref{rmk:VonZ}, this also holds for a large class of potential distribution, by considering the wreath product $\Lambda \wr \Gamma$, $\Lambda$ being a finitely generated Abelian group.
\end{example}

\begin{proof} By assumption, the density of state measure $\bar{\mu}_{H}=\dE \mu_{H(\omega)}^{e_\Gamma}$ coincides with the Plancherel measure of the convolution operator $M=L_{m_{\Lambda \wr \Gamma}}$. Up to a scaling, we can assume that $m_{\Lambda \wr \Gamma}$ is a probability measure. From Kesten criterion on amenable groups, it follows that $\sigma = \max \Sigma(M)=\| M \| = 1$. The moments $m_{2n}:=\int \lambda^{2n} \mu_M^{e_{\Lambda \wr \Gamma}}(\dd \lambda)$ correspond to the return probability of the simple random walk with iid increment following $m_{\Lambda \wr \Gamma}$. We write $f \prec g$ if one has $f(n) \leq C g(c n)$ for some constant $c,C>0$. If both $f\prec g$ and $g\prec f$, we write $ f\sim g$. It is well known that the asymptotic return probability does not depend on the choice of the measure, as soon as it is symmetric, have second moment, and generating the group, see \cite{pittet2000stability}. From a careful analysis of isoperimetric profile on wreath products, the asymptotic return probability has been computed in \cite[Theorem 2]{erschler2006isoperimetry}. One has
$$m_{2n} \sim \exp(-n^{d/d+2}\log^{\alpha}(n)),$$
where $\alpha = 2/(d+2)$ if $\Lambda$ is infinite, and $\alpha = 0$ otherwise. It remains to relate the behavior of the return probability to the spectral measure via heat kernel estimates. Let
$$k(t)=\langle \delta_{e_{\Lambda \wr \Gamma}}, e^{-t(I-M)} \delta_{e_{\Lambda \wr \Gamma}}\rangle = \int_{-1}^1e^{-t(1-\lambda) } \mu_{M}^{e_{\Lambda \wr \Gamma}}(\dd \lambda).$$
When $t$ and $n$ have the same order, the return probability and the heat kernel share the same asymptotic, e.g. from \cite[proposition 3.2]{pittet2000stability}, one has
$$m_{2n+2} \leq 2k(2n), \quad k(4n)\leq e^{-2n} + m_{2n}.$$
Let $I_\varepsilon = [1-\varepsilon,1]$, one has
$$e^{-\varepsilon t}\bar{\mu}_{H}\left( I_\varepsilon \right) \leq k(t) = \int_{-1}^1e^{-t(1-\lambda) } \bar{\mu}_{H}(\dd \lambda) \leq e^{-\varepsilon t} + \bar{\mu}_{H}(I_\varepsilon).$$
We start with the upper bound. For some $C,c>0$, we obtain if $t$ is large enough, 
$$\bar{\mu}_{H}(I_\varepsilon) \leq e^{\varepsilon t}(e^{-t/2}+Ce^{-c t^{d/{d+2}}\ln^\alpha(t)}) \leq 2Ce^{-ct^{d/d+2}+\varepsilon t}$$
Taking $\varepsilon t = ct^{d/d+2} /2$ and $\varepsilon\downarrow 0$, we get for some $c'>0$
$$\bar{\mu}_{H}(I_\varepsilon) \leq 2Ce^{-c \varepsilon t/2} = 2C \exp(-c'\varepsilon ^{-d/2})$$
from which $\limsup_{\varepsilon \downarrow 0} \ln(-\ln(\bar{\mu}_{H}(I_\varepsilon)))/\ln(\varepsilon) \leq -d/2$ follows. For the lower bound, one has for some other constant $C,c>0$
$$\bar{\mu}_{H}(I_\varepsilon) \geq -e^{\varepsilon t}+Ce^{-ct^{d/d+2}\ln^{\alpha}(t)} = Ce^{-ct^{d/d+2}\ln^{\alpha}(t)} (1-e^{-\varepsilon t + ct^{d/d+2}\ln^{\alpha}(t)}/C).$$
Taking $\varepsilon t = 2ct^{d/d+2}\ln^{\alpha}(t)=2ct^{\frac{d}{d+2}+o(1)}$, $\varepsilon\downarrow 0$, we get 
$$\bar{\mu}_{H}(I_\varepsilon) \geq \tfrac{C}{2} e^{-ct^{d/d+2}\ln^{\alpha}(t)} \geq C/2\exp(\varepsilon^{\frac{-d}{2}+o(1)}).$$
and the result follows.
\end{proof}

\subsection{An exact formula for the Green’s function second moment}

Concerning random Schr\"{o}dinger operators, one is often more interested in the almost sure properties of individual realization rather than averaged quantity such as the density of states. The following proposition formally asserts that $H(\omega)$ can be reconstructed as a Fourier series in terms of the operator $M$. Consider the setting of Theorem \ref{thm:abeliancoresp}: a random Schrodinger operator $H(\omega)$ whose direct integral $\int_\Omega^\oplus H(\omega) \dP( \dd \omega)$ is unitary equivalent to a convolution operator $M=L_{m_{\Lambda \wr \Gamma}}$, and $\Lambda$ is a finitely generated Abelian group. For simplicity, we assume $\Lambda = \dZ$.
\begin{proposition}[Reverse correspondence \label{prop:reversecoresp}] For any smooth bounded function $f\colon \Sigma(M) \to \dC$, the following identity holds in $L^2(\Omega,\dP)$
\begin{equation}\label{eq:HfourierL}
  \langle \delta_{e_\Gamma} , f(H(\omega)) \delta_g \rangle = \sum_{l\in \bigoplus_\Gamma \dZ} \langle \delta_{e_{\dZ \wr \Gamma}}, f(M) \delta_{(l,g)}\rangle e^{2\pi i l\cdot \omega}.  
\end{equation}
In particular, Parseval identity applies
\begin{equation}\label{eq:Parseval}
  \dE |\langle \delta_{e_\Gamma}, f(H(\omega)) \delta_g \rangle |^2 = \sum_{l\in \bigoplus_\Gamma \dZ} |\langle \delta_{e_{\dZ \wr \Gamma}}, f(M) \delta_{(l,g)}\rangle |^2. 
\end{equation}
\end{proposition}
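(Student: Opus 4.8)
The plan is to transport the unitary equivalence of Theorem~\ref{thm:abeliancoresp} through the functional calculus and then recognize the matrix coefficients of $f(M)$ as Fourier coefficients on the compact abelian group $\Omega$. Recall that by construction $\Omega$ is the Pontryagin dual of the discrete group of lamp configurations $N:=\bigoplus_\Gamma\dZ$, with $\dP$ its normalized Haar measure, so that the characters $\chi_l\colon\omega\mapsto e^{2\pi i l\cdot\omega}$, $l\in N$, form an orthonormal basis of $L^2(\Omega,\dP)$ by Plancherel's theorem; the asserted identities are precisely this basis expansion and its Parseval relation.

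First I would note that, since $f$ is bounded on $\Sigma(M)$ and $\Sigma(H(\omega))\subseteq\Sigma(M)$ for $\dP$-a.e.\ $\omega$, the map $\omega\mapsto f(H(\omega))$ is a uniformly bounded measurable field of operators; because the Borel functional calculus commutes both with conjugation by the unitary $\lF$ and with the formation of direct integrals, \eqref{eq:UnitaryEquiv} upgrades to $f(M)=\lF\big(\int_\Omega^\oplus f(H(\omega))\,\dP(\dd\omega)\big)\lF^*$. In particular, since the constant field $\omega\mapsto\delta_g$ is square-integrable ($\dP$ being a probability measure) and $\int_\Omega^\oplus f(H(\omega))\,\dP(\dd\omega)$ is bounded, the field $\omega\mapsto f(H(\omega))\delta_g$ lies in $L^2(\Omega,\dP;\ell^2(\Gamma))$; hence $F_g(\omega):=\langle\delta_{e_\Gamma},f(H(\omega))\delta_g\rangle$ defines an element of $L^2(\Omega,\dP)$, which is what makes \eqref{eq:HfourierL} a bona fide $L^2$ statement and \eqref{eq:Parseval} meaningful. (The smoothness of $f$ is convenient but not essential at this level: boundedness and measurability are all the argument uses.)

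Next I would unfold the matrix coefficients. Using the explicit form of $\lF$ from Theorem~\ref{thm:abeliancoresp} --- the Fourier transform in the lamp variable composed with the ``unwinding'' unitary of the semidirect product --- one gets $\lF^*\delta_{e_{\dZ\wr\Gamma}}=(\omega\mapsto\delta_{e_\Gamma})$, the constant field, and $\lF^*\delta_{(l,g)}=(\omega\mapsto\chi_l(\omega)\,\delta_g)$ up to the $g$-twist built into $\lF$. Pairing through $\lF$ and using the previous step,
\[
  \langle\delta_{e_{\dZ\wr\Gamma}},f(M)\delta_{(l,g)}\rangle=\int_\Omega \overline{\chi_l(\omega)}\,F_g(\omega)\,\dP(\dd\omega)=\widehat{F_g}(l),
\]
the $l$-th Fourier coefficient of $F_g$ (the conjugation is a harmless convention, as $l$ runs over all of $N$). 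Then \eqref{eq:HfourierL} is nothing but the Fourier expansion $F_g=\sum_{l\in N}\widehat{F_g}(l)\,\chi_l$, valid in $L^2(\Omega,\dP)$ since $\{\chi_l\}$ is an orthonormal basis, and \eqref{eq:Parseval} is the matching Parseval identity $\|F_g\|_{L^2}^2=\sum_l|\widehat{F_g}(l)|^2$ together with $\|F_g\|_{L^2}^2=\dE|\langle\delta_{e_\Gamma},f(H(\omega))\delta_g\rangle|^2$. For a general finitely generated abelian lamp group $\Lambda$ the same argument works verbatim with $N=\bigoplus_\Gamma\dZ$ replaced by $\bigoplus_\Gamma\Lambda$ and $\Omega$ by its compact dual; $\Lambda=\dZ$ is written out only for concreteness.

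The hard part will not be analytic but notational: the wreath (semidirect) structure inserts a $g$-dependent twist into $\lF^*\delta_{(l,g)}$, and one has to verify that the $\Gamma$-action on $\Omega$ chosen in Theorem~\ref{thm:abeliancoresp} to define the random potential $(v(x,\omega))_{x\in\Gamma}$ is exactly the one absorbing this twist, so that the indexing in \eqref{eq:HfourierL} comes out clean for every $g\in\Gamma$. I do not anticipate a real obstacle here: the spectral content is entirely carried by Theorem~\ref{thm:abeliancoresp}, and Proposition~\ref{prop:reversecoresp} is its translation into a Fourier series via Pontryagin duality.
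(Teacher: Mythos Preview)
Your proposal is correct and follows essentially the same route as the paper: push the functional calculus through the direct integral, use the unitary equivalence of Theorem~\ref{thm:abeliancoresp} to identify $\langle \delta_{e_{\dZ\wr\Gamma}}, f(M)\delta_{(l,g)}\rangle$ with the $l$-th Fourier coefficient of $\omega\mapsto\langle\delta_{e_\Gamma},f(H(\omega))\delta_g\rangle$, and then read off \eqref{eq:HfourierL} and \eqref{eq:Parseval} as the Fourier expansion and Parseval identity on the compact dual $\Omega$. Your worry about a $g$-dependent twist is unfounded: in the paper $\lF=\lF_1\otimes I_{\ell^2(\Gamma)}$ sends $\delta_{(l,g)}$ directly to $\chi_l\delta_g$, so no bookkeeping beyond what you have written is needed.
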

\begin{remark}
    Setting $ f(\lambda) = (\lambda - z)^{-1} $, where $ z \in \mathbb{C} \setminus \mathbb{R} $, equation \eqref{eq:Parseval} provides an exact formula for the second moment of the complex Green’s function. For random Schrödinger operators, one possible approach for proving the almost sure absolute continuity of the spectrum is to show that this quantity remains bounded as $ z $ approaches the real axis; see \cite[Lemma 20]{arras2023existence}. The above proposition has a surprising interpretation. The Green’s function admits a second moment, given by $\mathbb{E} \left| \langle \delta_{e_\Gamma}, (H(\omega)-z)^{-1} \delta_{e_\Gamma} \rangle \right|^2$, if the quantity $\sum_l \left| \langle \delta_{e_{\mathbb{Z} \wr \Gamma}}, (M-z)^{-1} \delta_{(e_{\Gamma},g)}\rangle \right|^2$ remains finite. This implies that the eigenfunctions of $ M $ decay sufficiently fast in $ l $. Consequently, the delocalization of $ H(\omega) $ is related to the localization properties of the eigenfunctions of $ M $ along the group of lamps.
\end{remark}

\begin{proof}[Proof of proposition \ref{prop:reversecoresp}.] It suffices to compute the Fourier coefficient of the map $\omega \mapsto \langle \delta_e , f(H(\omega)) \delta_g \rangle$, seen as an element of the Hilbert space $L^2(\Omega,\dP)$. From the basic theory of direct integral \cite{dixmier1969c}, one has for a smooth function $f$ 
$$f\left(\int_{\Omega}^{\oplus}H(\omega) \dd \dP(\omega)\right)=\int_{\Omega}^{\oplus}f(H(\omega)) \dd \dP(\omega),$$
and from theorem \ref{thm:abeliancoresp}
$$   \int_{\Omega} \langle \delta_e, f(H(\omega)) \delta_g \rangle \chi_l(\omega)\dd \dP(\omega)= \langle \chi_0\delta_{e_\Gamma}, \left(\int_{\Omega}^{\oplus}f(H(\omega)) \dd \dP(\omega)\right)  \chi_l \delta_x \rangle = \langle \delta_{e_{\dZ \wr \Gamma}}, f(M) \delta_{(l,g)}\rangle.$$
\end{proof}

\section{Proofs of main theorems}

\begin{proof}[Proof of theorem \ref{thm:wrcoresp}]
All operators are bounded in norm by the sum of the absolute mass of the measures $m_\Gamma$ and $m_\Lambda$. The identity \eqref{eq:wrcoresp} reduces to the equalities of the sequence of moments
\begin{equation}\label{eq:momentmatch}
    \dE \langle \delta_{e_\Gamma}, H(\omega)^n \delta_{e_\Gamma} \rangle = \langle \delta_{e_{\Lambda \wr \Gamma}}, M^n \delta_{e_{\Lambda \wr \Gamma}} \rangle, \quad n\in \dN.
\end{equation}
Given a new formal variable $v$, we consider the set $W_n$ of all words of length $n$ in the alphabet $\Gamma \sqcup \{v\}$ which reduces to the identity element $e_\Gamma$
$$W_n = \left\{u=u_1u_2 \cdots u_n\in 
 (\Gamma \sqcup \{v\})^n \middle| \prod_{\substack{i=1\\u_i \not=v}}^n u_i = e_\Gamma \right\}.$$
We now express both sides of \eqref{eq:momentmatch} in terms of a summation over $W_n$. It is clear from the group multiplication that any word $u$ gives rise to a closed walk $x_0,x_1,\ldots x_n$ in $\Gamma$. Starting from $x_o=e_\Gamma$, one sets $x_i = u_i^{-1} x_{i-1}$ if $u_i \in \Gamma$, and $x_i = x_{i-1}$ if $u_i = v$.  To obtain the expansion $\langle \delta_{e_\Gamma}, H(\omega)^n \delta_{e_\Gamma} \rangle$, one must keep track of the weights of the walk. We assign $m_\Gamma(u_i)$ if $u_i \in \Gamma$, otherwise $u_i=v$ and we assign the value $v_{x_i}=v({x_i},\omega)$ of the potential at the current position $x_i=x_{i-1}$ of the walk. This gives
$$\langle \delta_{e_\Gamma}, H(\omega)^n \delta_{e_\Gamma} \rangle = \sum_{u \in W_n} \prod_{\substack{i=1\\u_i \in \Gamma}}^n m_\Gamma(u_i)\prod_{\substack{j=1\\u_j = v}}^n v_{x_j}.$$
In order to use the independence of the $\{v({x_i},\omega)\}_{x\in \Gamma}$, it is convenient to reorganize the sum. For each $y\in \Gamma$, let $k(u)$ denote the number of occurrences of $v_y$ in the last product:
$$k_y(u)=\# \left\{i\in \{1,\ldots ,n\}\middle| u_i = v \textmd{ and } x_i = y \right\},\qquad y \in \Gamma.$$
Note that for most $y$ one has simply $k_y(u)=0$. We obtain
$$\dE \langle \delta_{e_\Gamma}, H(\omega)^n \delta_{e_\Gamma} \rangle =  \sum_{u \in W_n} \prod_{\substack{i=1\\u_i \in \Gamma}}^n m_\Gamma(u_i)\prod_{y \in \Gamma} \dE[v_y^{k_y(u)}].$$
We now express the left hand side of \eqref{eq:momentmatch}. By construction, the action of $M$ on any element $(f,g)\in \Lambda \wr \Gamma$ is given by
$$M\delta_{(f,g)}=\sum_{s \in \Gamma} m_\Gamma(s)\delta_{\hat{s}^{-1}(f,g)}+\sum_{t \in \Lambda} m_\Lambda(t)\delta_{\hat{t}^{-1}(f,g)}.$$
We follow the lamplighter interpretation: the first sum corresponds to displacements of the lamplighter from $g$ to $s^{-1}g$. The second sum corresponds to a switch of the lamp at the position $g$ of the lamplighter, from the lamp state $f(g)$ to $t^{-1}f(g)$. A given word $u\in W_k$ already prescribes the displacement of the lamplighter. A letter $u_i =v$ with $x_i=y$ will correspond to an update of the lamp $y$. In total, there are $k_y(u)$ updates of the lamp at $y$. One must therefore choose a sequence $t_y=t^y_1\cdots t^y_{k_y(u)} \in \Lambda^{k(u)}$ to obtain a walk from $e_{\Lambda \wr \Gamma}$ with $n$ steps. For the walk in $\Lambda \wr \Gamma$ to be closed, each lamp must be in the state $e_\Lambda$ at the end of the walk. We introduce $T_k$, the subset of $\Lambda^k$ which reduces to the identity $e_\Lambda$ by the group multiplication:
$$T_k = \left\{t=t_1 \cdots t_k \in \Lambda^k \middle|\prod_{i=1}^k t_i =e_\Lambda \right\}.$$
Keeping track of the weight, we obtain:
$$\langle \delta_{e_{\Lambda \wr \Gamma}}, M^n \delta_{e_{\Lambda \wr \Gamma}} \rangle = \sum_{u \in W_n, } \prod_{\substack{i=1\\u_i \in \Gamma}}^n m_\Gamma(u_i)\prod_{y \in \Gamma} \left(\sum_{t^y \in T_{k_y(u)}}\prod_{j=1}^{k_y(u)}m_\Lambda(t_j^y) \right).$$ 
To conclude, we recall that the one-site potential distribution follows the spectral measure of the convolution operator $B=L_{m_\Lambda}$. Since
$\int v^k \mu_B^{e_\lambda}(\dd v) =  \langle \delta_{e_\Lambda}, B^k \delta_{e_\Lambda} \rangle = \sum_{t \in T_{k}}\prod_{j=1}^{k}m_\Lambda(t_j)$, it follows
$$\langle \delta_{e_{\Lambda \wr \Gamma}}, M^n \delta_{e_{\Lambda \wr \Gamma}} \rangle = \sum_{u \in W_n, } \prod_{\substack{i=1\\u_i \in \Gamma}}^n m_\Gamma(u_i)\prod_{y \in \Gamma} \left( \dE [v_y^{k_y(u)}]\right)=\dE \langle \delta_{e_\Gamma}, H(\omega)^n \delta_{e_\Gamma} \rangle.$$
\end{proof}


When the group of lamp is $\Lambda = \dZ/2\dZ$, and the base group $\Gamma$ is finite, the wreath product has order $|\Gamma|2^{|\Gamma|}$, matching the dimension of the matrix $H_\Omega=\oplus_{\omega \in \Omega} H(\omega)$, where in the direct sum ranges over the $2^{|\Gamma|}$ possible assignments of the potential. From theorem \ref{thm:wrcoresp}, it follows that $M$ is unitary equivalent to the direct sum $H_\Omega$. This suggests that the correspondence can be extended to eigenvectors. We first gather some basic facts about the concept of direct integral, which generalizes the direct sum. A comprehensive treatment can be found in \cite{dixmier1969c}.

\paragraph{Direct integral.}We will only be concerned with direct integrals of a constant Hilbert field, which allows us to avoid many technicalities concerning measurability. Given a Hilbert space $\lH$ and a standard Borel space $(X,\mu)$, we denote $\lH_X$ the direct integral of $\lH$ over $(X,\mu)$,
$$\lH_X = \int_{X}^{\oplus} \lH \,\mu(\dd s):=L^2(X,\mu; \lH).$$
This is the space of (equivalent classes of) weakly measurable maps from $X$ to $\lH$, verifying the summability condition
$$\|\varphi\|_{\lH_X}^2=\int_{X} \|\varphi(s)\|_{\lH}^2  \mu(  \dd s).$$
A uniformly bounded operator valued map $A \colon X \to \lB(\lH)$ can be turn into an operator $A_X$, denoted
$$A_X := \int_{X}^{\oplus}A(s)\mu( \dd s),$$
This is the direct integral of $A(s)$ over $X$ and is defined point-wise by $(A_X\varphi)(s)=A(s)\varphi(s)$.

\begin{remark}
For non-constant fields $s\mapsto \lH_s$ of Hilbert spaces, the construction of $\lH_X = \int_{X}^{\oplus} \lH_s \mu( \dd s)$ is similar but depends on a choice of a basis for each space $\lH_s$ that is compatible with the structure $(X,\mu)$, see \cite{dixmier1969c}. An abstract formulation of the spectral theorem says that any self-adjoint operator is unitary equivalent to the direct integral of a real multiple of the identity operator: for some Borel space $(X,\mu)$, there is a measurable field of Hilbert space $(\lH_s)$ together with a real map $\lambda(s)$ such that $H \simeq \int_X^{\oplus}\lambda(s)I_{\lH_s}\mu( \dd s)$. We refer to \cite[Theorem 7.19]{hall2013quantum} for more details.    
\end{remark}

As a first step toward theorem \ref{thm:abeliancoresp}, we consider the Abelian group $\Lambda = \dZ$ and three operators $A=L_{m_\Gamma}$, $B=L_{m_\dZ}$, $M=L_{\hat{m}_\Gamma+\hat{m}_\dZ}$ acting respectively on $\ell^2(\Gamma),\ell^2(\dZ)$ and $\ell^2(\dZ \wr \Gamma)$. 

\begin{lemma}\label{lmm:Vdistribution} Let $V_{m_\dZ}(\cdot)=\sum_n m_\dZ(n)e^{i2\pi n \cdot} \in L^2(\dR/\dZ)$ be the Fourier transform of the symmetric measure $m_\dZ \in \mathcal{M}_{\dC}(\dZ)$. If $U$ follows the uniform distribution on $[0,1]$, then $V_{m_\dZ}(U)$ is distributed according to the spectral measure $\mu_{B}^{e_\dZ}$.
\end{lemma}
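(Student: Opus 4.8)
The plan is to diagonalize $B=L_{m_\dZ}$ by Fourier analysis on $\dZ$ and to identify $\mu_B^{e_\dZ}$ as a pushforward of Lebesgue measure under $V_{m_\dZ}$. First I would introduce the unitary Fourier transform $\Phi\colon \ell^2(\dZ)\to L^2(\dR/\dZ)$ determined by $(\Phi f)(\theta)=\sum_{m}f(m)e^{-2\pi i m\theta}$, so that $\Phi\delta_n$ is the character $\theta\mapsto e^{-2\pi i n\theta}$ and, in particular, $\Phi\delta_{e_\dZ}=\mathbf{1}$, the constant function $1$. From $(L_n f)(m)=f(m-n)$ one computes that $\Phi L_n\Phi^{*}$ is multiplication by $e^{-2\pi i n\theta}$, hence $\Phi B\Phi^{*}$ is multiplication by $\psi(\theta)=\sum_n m_\dZ(n)e^{-2\pi i n\theta}$; the series converges uniformly because $m_\dZ\in\ell^1(\dZ)$, so $\psi$ is continuous and bounded. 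The symmetry relation $m_\dZ(-n)=\overline{m_\dZ(n)}$ gives $\overline{\psi}=\psi$, i.e.\ $\psi$ is real-valued, in accordance with $B=B^{*}$.

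Next I would reconcile $\psi$ with the function $V_{m_\dZ}$ appearing in the statement: one has $V_{m_\dZ}(\theta)=\sum_n m_\dZ(n)e^{2\pi i n\theta}=\psi(-\theta)$. Since the reflection $R\colon g\mapsto g(-\,\cdot\,)$ is a unitary operator of $L^2(\dR/\dZ)$ that fixes $\mathbf{1}$ and conjugates multiplication by $\psi$ into multiplication by $V_{m_\dZ}$, the operator $B$ is unitarily equivalent, through $W:=R\Phi$, to the operator of multiplication by $V_{m_\dZ}$ on $L^2(\dR/\dZ)$, with $W\delta_{e_\dZ}=\mathbf{1}$.

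Finally, I would read off the spectral measure. Under $W$ the projection-valued measure $E_B$ is carried to the spectral measure of the multiplication operator, and for multiplication by $V_{m_\dZ}$ the spectral projection of a Borel set $S\subseteq\dR$ is multiplication by the indicator $\mathbf{1}_{V_{m_\dZ}^{-1}(S)}$. Therefore, for every Borel $S$,
\[
\mu_B^{e_\dZ}(S)=\langle\delta_{e_\dZ},E_B(S)\delta_{e_\dZ}\rangle=\big\langle\mathbf{1},\mathbf{1}_{V_{m_\dZ}^{-1}(S)}\,\mathbf{1}\big\rangle_{L^2(\dR/\dZ)}=\int_0^1\mathbf{1}_S\!\big(V_{m_\dZ}(\theta)\big)\,\dd\theta=\dP\!\big(V_{m_\dZ}(U)\in S\big),
\]
since $U$ is uniform on $[0,1]$; this is exactly the claim. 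Equivalently, one can verify the statement moment-by-moment: expanding $\int_0^1 V_{m_\dZ}(\theta)^k\,\dd\theta$ and integrating the resulting characters selects precisely $\sum_{t\in T_k}\prod_j m_\dZ(t_j)=\langle\delta_{e_\dZ},B^k\delta_{e_\dZ}\rangle$ in the notation of the proof of Theorem~\ref{thm:wrcoresp}, and these moments determine the compactly supported measure $\mu_B^{e_\dZ}$. There is essentially no obstacle here; the only points needing care are fixing a consistent Fourier convention, noting that the symmetry of $m_\dZ$ makes $V_{m_\dZ}$ real-valued so that the pushforward lives on $\dR$ and the multiplication operator is genuinely self-adjoint, and keeping track of the harmless reflection $\theta\mapsto-\theta$ that relates the two sign conventions for $\sum_n m_\dZ(n)e^{\pm 2\pi i n\theta}$.
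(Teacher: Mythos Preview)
Your proof is correct. The paper's own argument is the one-line moment check you mention at the very end: it simply notes that $m_\dZ$ symmetric and $\ell^1$ makes $V_{m_\dZ}$ real and bounded, and that $\int_0^1 V_{m_\dZ}(u)^k\,\dd u=\langle\delta_{e_\dZ},B^k\delta_{e_\dZ}\rangle$ for all $k$, which determines the compactly supported measure $\mu_B^{e_\dZ}$. Your main route---conjugating $B$ by the Fourier transform to a multiplication operator and reading off the spectral measure as the pushforward of Lebesgue measure by $V_{m_\dZ}$---is a slightly more structural alternative that yields the same conclusion without invoking moment determinacy, and has the virtue of making explicit the unitary equivalence that the paper in any case uses later (in Theorem~\ref{thm:Zcorresp}). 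The reflection $\theta\mapsto-\theta$ you introduce is harmless but in fact unnecessary here: since $m_\dZ$ is symmetric one already has $\psi=V_{m_\dZ}$.
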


\begin{remark}\label{rmk:VonZ} Conversely, most potential distribution $\nu$ can be realized in this manner. Simply take $F^{-1}(U)$, the generalized inverse of the inverse cumulative distribution $F(t)=\nu((-\infty,t])$. However, it is not guaranteed that the Fourier series $F^{-1}(\cdot)=\sum_n m(n)e^{i 2\pi n \cdot}$ is in $\ell^1(\dZ)$.
\end{remark}

\begin{proof}[Proof lemma \ref{lmm:Vdistribution}] Since $m_\dZ$ is symmetric and $\ell^1$, $V_{m_\dZ}(\cdot)=\sum_{n} m_\dZ(n)e^{2\pi i \cdot}$ is a well-defined real and bounded function. The fact that $V_{m_\dZ}(U)$ is distributed following $\mu_{B}^{e_\dZ}$ follows from $\int_{[0,1]}V_{m_\dZ}(u)^k \dd u = \langle \delta_{e_\dZ}, B^k \delta_{e_\dZ} \rangle$, $k\in \dN$.
\end{proof}
Consider the probability space $\Omega=[0,1]^\Gamma$ with the cylinder algebra and $\dP$ the uniform product measure. On this space, we construct $H(\omega)=A+V$ as the operator valued random variable with
\begin{equation}\label{eq:defHomega}
  V=\operatorname{Diag}(v(x,\omega))_{x\in \Gamma}, \quad v(x,\omega)=V_{m_\dZ}(\omega_x), \quad \omega=( \omega_x ) \in [0,1]^\Gamma.  
\end{equation} 
We recall that the Fourier transform from $\dZ$ to $\dR/\dZ \simeq [0,1]$ extends to countable product (e.g \cite[chapter 4]{folland2016course}). We denote $\lF_1$ the induced unitary map
\begin{equation}\label{eq:FourierDef}
  \lF_1 \colon \ell^2\Big(\bigoplus_{\Gamma} \dZ \Big) \to L^2([0,1]^\Gamma, \dP),  
\end{equation}
which sends any basis element $\delta_f, f\in \bigoplus_\Gamma \dZ$ to the character function $\chi_f$ defined as the product $\chi_f(\omega)=e^{i2\pi f\cdot \omega}=\prod_{x\in\Gamma}e^{i2\pi f(x)\omega_x}$.

\begin{theorem}[Unitary equivalence]\label{thm:Zcorresp} Let $M=L_{m_{\dZ \wr \Gamma}}$ with $m_{\dZ \wr \Gamma}=\hat{m}_\Gamma+\hat{m}_\dZ$ as in theorem \ref{thm:wrcoresp}, and let $H(\omega)$ the operator valued random variable on the probability space $([0,1]^\Gamma,\dP)$ as constructed in \eqref{eq:defHomega}. Then $M$ is unitary equivalent to the direct integral of $H(\omega)$ over $(\Omega,\dP)$. More precisely if $\lF=\lF_1 \otimes I_{\ell^2(\Gamma)}$. One has
\begin{equation}\label{eq:unitaryequiv}
    H_\Omega = \int_{\Omega}^{\oplus}H(\omega) \dP( \dd \omega) = \lF M \lF^*.
\end{equation}
\end{theorem}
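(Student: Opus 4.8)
\emph{Strategy.} Equation~\eqref{eq:unitaryequiv} is an identity of bounded operators, so it is enough to fix $\lF$ precisely and verify $\lF M = H_\Omega\lF$ on the standard basis $\{\delta_{(f,g)}\}$ of $\ell^2(\dZ\wr\Gamma)$. The map $\lF$ is a composition of two unitaries. First, re-coordinatize $\ell^2(\dZ\wr\Gamma)\cong\ell^2(\bigoplus_\Gamma\dZ)\otimes\ell^2(\Gamma)$ by recording the lamp configuration \emph{relative to the lamplighter}, i.e. through the bijection $(f,g)\mapsto(f\circ g,g)$ of $\dZ\wr\Gamma$; it sends $\delta_{(f,g)}$ to $\delta_{f\circ g}\otimes\delta_g$, and the induced unitary I call $U$. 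Second, apply $\lF_1\otimes I_{\ell^2(\Gamma)}$ with $\lF_1$ the Fourier transform~\eqref{eq:FourierDef}, which is unitary from $\ell^2(\bigoplus_\Gamma\dZ)$ onto $L^2(\widehat{\bigoplus_\Gamma\dZ},\mathrm{Haar})=L^2([0,1]^\Gamma,\dP)$ by the Pontryagin duality recalled above. Thus $\lF=(\lF_1\otimes I_{\ell^2(\Gamma)})\,U$, the re-coordinatization $U$ being the content hidden in the factor $\lF_1\otimes I_{\ell^2(\Gamma)}$.

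\emph{Action of the two pieces of $M$.} Write $M=L_{\hat m_\Gamma}+L_{\hat m_\dZ}$. In the relative coordinates the base part decouples: from the identity $(f\circ s^{-1})\circ(sg)=f\circ g$ in $\bigoplus_\Gamma\dZ$ one gets $U L_{(e,s)}U^{*}(\delta_{f\circ g}\otimes\delta_g)=\delta_{f\circ g}\otimes\delta_{sg}$, hence $U L_{\hat m_\Gamma}U^{*}=I\otimes A$, which becomes the fibered operator $\omega\mapsto A$ after $\lF_1$. The lamp part keeps $g$ fixed and, since the shift $\psi\mapsto\psi\circ g$ is an automorphism of $(\bigoplus_\Gamma\dZ,+)$, acts on the first factor as left convolution by the finitely supported measure $\sum_t m_\dZ(t)\,\delta_{1_t\circ g}$, where $1_t\circ g$ is the configuration carried by the single site $g^{-1}$ with value $t$. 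The abelian Fourier transform $\lF_1$ diagonalizes this convolution into multiplication by $\omega\mapsto\sum_t m_\dZ(t)e^{2\pi i t\,\omega_{g^{-1}}}=V_{m_\dZ}(\omega_{g^{-1}})$ (definition of $V_{m_\dZ}$ plus symmetry of $m_\dZ$). So $L_{\hat m_\dZ}$ becomes the fibered multiplication $\omega\mapsto\operatorname{Diag}(V_{m_\dZ}(\omega_{g^{-1}}))_{g\in\Gamma}$, which is the potential $V(\omega)$ of~\eqref{eq:defHomega} after the measure-preserving relabeling $\omega\mapsto(\omega_{g^{-1}})_g$ of $(\Omega,\dP)$.

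\emph{Putting it together.} Adding, $\lF M\lF^{*}$ is the fibered operator $\omega\mapsto A+V(\omega)=H(\omega)$, i.e. $\lF M\lF^{*}=\int_\Omega^{\oplus}H(\omega)\,\dP(\dd\omega)$; concretely $\lF\delta_{(f,g)}$ is the section $\omega\mapsto\chi_{f\circ g}(\omega)\delta_g$, and both $\lF M\delta_{(f,g)}$ and $H_\Omega\lF\delta_{(f,g)}$ have fiber $\chi_{f\circ g}(\omega)H(\omega)\delta_g$ at $\omega$. The direct integral $H_\Omega$ is a genuine bounded operator because $\|H(\omega)\|\le\|A\|+\|V_{m_\dZ}\|_\infty$ uniformly in $\omega$ and $\omega\mapsto H(\omega)$ is weakly measurable, so checking the identity on the total family $\{\delta_{(f,g)}\}$ establishes~\eqref{eq:unitaryequiv}. (This simultaneously upgrades Theorem~\ref{thm:wrcoresp} from the single diagonal coefficient at $e_{\dZ\wr\Gamma}$ to all matrix coefficients.)

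\emph{Main obstacle.} The one delicate point is the choice of $\lF$, i.e. the re-coordinatization $U$: with the naive identification $\delta_{(f,g)}\mapsto\delta_f\otimes\delta_g$, left multiplication by $(e,s)$ shifts the lamp configuration by $s$, so after $\lF_1$ it translates the variable $\omega$, and $\lF_1 L_{\hat m_\Gamma}\lF_1^{*}$ is not fibered over $(\Omega,\dP)$ at all — it is genuinely not a direct integral. Passing to lamplighter-relative coordinates is exactly what freezes the lamp array under the $\Gamma$-action while keeping the lamp part a convolution on an abelian group, hence diagonalizable by $\lF_1$; everything after that is the bookkeeping above. The $\omega_g$ versus $\omega_{g^{-1}}$ discrepancy is purely conventional and is absorbed by the coordinate permutation $g\mapsto g^{-1}$, a symmetry of the product measure $\dP$; this is also what lets the statement be phrased with the plain factor $\lF_1\otimes I_{\ell^2(\Gamma)}$.
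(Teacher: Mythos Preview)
Your argument is correct, and in fact it is more careful than the paper's own proof on precisely the point you flag under ``Main obstacle.'' Both you and the paper follow the same overall strategy---verify the operator identity on the total family $\{\delta_{(f,g)}\}$---but the paper computes $\langle\delta_{(f',g')},M\delta_{(f,g)}\rangle$ as if left multiplication by $\hat{s}=(e,s)$ fixes the lamp configuration (it writes the base contribution as $1_{f=f'}\langle\delta_{g'},A\delta_g\rangle$ and the lamp contribution as supported at the site $g$). Under the paper's own conventions, $\hat{s}(f,g)=(f\circ s^{-1},sg)$, so the lamp array \emph{does} shift under left multiplication, and the naive identification $\delta_{(f,g)}\mapsto\chi_f\delta_g$ does not make $L_{\hat m_\Gamma}$ fibered over $\Omega$. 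What makes the paper's displayed matrix elements correct is exactly your re-coordinatization $(f,g)\mapsto(f\circ g,g)$ (equivalently: right multiplication by $\hat{s}$ fixes $f$ and moves the lamp change to site $g$). So your extra unitary $U$ is not a different route; it is the step that justifies the paper's computation.

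Two minor clarifications. First, be explicit that the relabeling $\omega\mapsto(\omega_{g^{-1}})_g$ on $\Omega$ is implemented by a unitary $U_T$ on $L^2(\Omega,\dP;\ell^2(\Gamma))$, and that your final intertwiner is $U_T\circ(\lF_1\otimes I)\circ U$; as stated, your sentence ``absorbed by the coordinate permutation'' gives only unitary equivalence of the two direct integrals, not literal equality---which is all the theorem asserts, but it is worth saying. Second, your phrase ``the re-coordinatization $U$ being the content hidden in the factor $\lF_1\otimes I_{\ell^2(\Gamma)}$'' is a slight abuse: the theorem's $\lF=\lF_1\otimes I$ is literally your map only if one \emph{defines} the tensor identification $\ell^2(\dZ\wr\Gamma)\cong\ell^2(\bigoplus_\Gamma\dZ)\otimes\ell^2(\Gamma)$ via $\delta_{(f,g)}\mapsto\delta_{f\circ g}\otimes\delta_g$ rather than the set-theoretic $\delta_{(f,g)}\mapsto\delta_f\otimes\delta_g$. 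Stating that choice once up front removes any ambiguity.
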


\begin{proof}[Proof of theorem \ref{thm:Zcorresp}.] By construction, $\lF$ maps the canonical basis $\delta_{h}$, $h=(f,g)$ to the unit element of $L^2(\Omega,\dP,\ell^2(\Gamma))$ defined as $\chi_f\delta_g \colon \omega \mapsto \chi_f(\omega)\delta_g \in \ell^2(\Gamma)$. Equation \eqref{eq:unitaryequiv} follows from a computation on the canonical basis. On the one hand we have, for $h' = (f',g')$,
\begin{align*}
    \langle \delta_{h'},M \delta_{h}\rangle &= \sum_{s \in \Gamma} m_\Gamma(s)\langle  \delta_{(f',g')}, \delta_{\hat{s} (f,g)} \rangle +\sum_{t \in \dZ} m_\Lambda(t)\langle  \delta_{(l',g')}, \delta_{\hat{t} (f,g)} \rangle  \\
    &= 1_{f=f'} \langle \delta_g'|A \delta_g\rangle + 1_{\{g=g'\}}\left(\prod_{\substack{x\in \Gamma\\
    x\not=g}}1_{\{l_x =l'_x\}}\right) \langle \delta_{f'(g)},B \delta_{f(g)}\rangle,
\intertext{and on the other hand we have the direct integral }
    \langle \chi_{l'} \delta_{g'}, H_\Omega \chi_l \delta_g \rangle &= \int_{\Omega} \langle \chi_{l'} (\omega) \delta_{g'}, (H\chi_l\delta_g)(\omega) \rangle  \dd \dP(\omega)\\
    &= \int_{\Omega} \langle \delta_{g'} , A \delta_g + v_g(\omega)\delta_g \rangle \overline{\chi_{l'}} (\omega)\chi_l(\omega)\dd \dP(\omega)\\
    &= 1_{l=l'} \langle \delta_g'|A \delta_g\rangle + 1_{\{g=g'\}} \int_{\Omega}V(\omega_x)e^{2\pi i(l-l')\cdot \omega}\dd\dP(\omega)\\
    &= 1_{l=l'} \langle \delta_g'|A \delta_g\rangle + 1_{\{g=g'\}} \left(\prod_{\substack{x\in \Gamma\\
    x\not=g
    }}1_{\{l_x =l'_x\}}\right)  \langle \delta_{l_g},B \delta_{l_g}\rangle.
\end{align*}
\end{proof}

The proof generalizes easily for any finitely generated Abelian group.

\begin{proof}[Proof of Theorem \ref{thm:abeliancoresp}]
By assumption, the group of lamps is a finitely generated Abelian group. Hence, it can be expressed as $\Lambda \simeq \mathbb{Z}^n \oplus \mathbb{Z}/d_1\mathbb{Z} \oplus \cdots \oplus \mathbb{Z}/d_k\mathbb{Z}$. Its dual group is the compact Abelian group $\hat{\Lambda} \simeq [0,1]^n \oplus \mathbb{Z}/d_1\mathbb{Z} \oplus \cdots \oplus \mathbb{Z}/d_k\mathbb{Z}$, endowed with the normalized Haar measure denoted $\mu$. Under the Fourier transform, the convolution operator $B = L_{m_{\Lambda}}$ becomes a multiplication operator $L^2(\hat{\Lambda},\mu)$, by a real function denoted $V_{m_{\Lambda}}$. We define the probability space $(\Omega, \dP)$ as the Pontryagin dual of the group of lamp configurations 
$$ \Omega = \widehat{\bigoplus_{\Gamma} \Lambda} = \prod_{\Gamma} \hat{\Lambda},\qquad \dP = \otimes_{\Gamma} \mu.$$
For $\omega = (\omega_x)_{x \in \Gamma} \in \Omega$, we define  
$$
V(\omega) = \operatorname{Diag}(v(x, \omega)),  \qquad v(x, \omega) = V_{m_{\Lambda}}(\omega_x).$$
As in lemma \ref{lmm:Vdistribution}, $(v(x,\omega))$ is an independent and identically distributed potential, following the distribution $\mu_{B}^{e_\Lambda}$. Next, we construct the unitary transform $\mathcal{F} = \mathcal{F}_1 \otimes I_{\ell^2(\Gamma)}$, where $I_{\ell^2(\Gamma)}$ denotes the identity operator on $\ell^2(\Gamma)$ and $\mathcal{F}_1$ denotes the Fourier transform, which provides a unitary map from $\ell^2(\bigoplus_{\Gamma} \Lambda)$ to $L^2(\Omega, \dP)$. With this setup, the proof that 
$$ \int_{\Omega}^{\oplus} H(\omega) \, \dP(\dd \omega) = \mathcal{F} M \mathcal{F}^* $$
follows the same steps as for the case $\Lambda = \mathbb{Z}$ discussed earlier.
\end{proof}

\section*{Acknowledgments}
The author thanks Charles Bordenave and Christophe Pittet for helpful discussions. The author was partially supported by the KKP 139502 project and the ERC Dynasnet 810115 project. 

\bibliographystyle{plain}

\bigskip

\noindent Adam Arras, \\
HUN-REN Alfr\'ed R\'enyi Institute of Mathematics, \\Budapest, Hungary,\\ {\tt arras@renyi.hu}

\end{document}